\begin{document}

\title{On inequivalences of sequences of characters}

\author{\tsname}
\address{\tsaddress}
\email{\tsemail}

\maketitle

\begin{abstract}
We establish various results including the following: if $1<p<\infty$ is not equal to $2$ and $\sigma$ is a bijection between the trigonometric functions on $[0,1)$ and the Walsh functions on $[0,1)$.  Then $\sigma$ does \emph{not} extend to an isomorphism $L_p[0,1) \rightarrow L_p[0,1)$.
\end{abstract}

\section{Introduction}

In the papers \cite{woj::0} and \cite{czuwoj::}, Wojciechowski and then Czuron and Wojciechowski applied particular tools from additive combinatorics and number theory to problems about the equivalence of various systems of characters on compact Abelian groups.  In this note we aim to build on their success by extracting the key ingredients from these tools and applying them directly.

The \textbf{trigonometric functions} are the functions $e_z:[0,1) \rightarrow \C; x \mapsto \exp(2\pi i zx)$ for $z \in \Z$, and the \textbf{Walsh functions} are the functions $w_J:[0,1) \rightarrow \C; x \mapsto\prod_{j \in J}{r_j(x)}$ where $J \subset \N_0$ is finite and for $j \in \N_0$, $r_j:[0,1) \rightarrow \{-1,1\}$ is the \textbf{$j$th Radamacher function}, defined by $r_j(x)=1$ if and only if $2^jx-\lfloor 2^jx\rfloor <\frac{1}{2}$.

Both the trigonometric and the Walsh functions are orthonormal bases for $L_2[0,1)$, and any bijection between them extends to an (isometric) isomorphism $L_2[0,1) \rightarrow L_2[0,1)$.  In \cite[\S3.2]{woj::1} and \cite[Problem 5.3]{pel::1} the question of what happens for $1 <p < \infty$ when $p \neq 2$ is asked, and we answer this as follows.
\begin{theorem}\label{thm.pel}
Suppose that $1 <p < \infty$ with $p\neq 2$ is a parameter and $\sigma$ is a bijection between the trigonometric and Walsh functions.  Then $\sigma$ does \emph{not} extend to an isomorphism $L_p[0,1) \rightarrow L_p[0,1)$.
\end{theorem}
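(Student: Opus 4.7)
The plan is to derive a contradiction by combining the $L_p$-isomorphism hypothesis with a saturation-implies-structure theorem for integer character sums, and then exhibiting a torsion obstruction: the group underlying the Walsh system (finite subsets of $\N$ under symmetric difference, isomorphic to $\bigoplus_{j\in\N}\Z/2\Z$) is $2$-torsion, while $\Z$ is torsion-free. Suppose for contradiction that $\sigma$ extends to a Banach-space isomorphism $T\colon L_p[0,1)\to L_p[0,1)$ with $\|T\|\|T^{-1}\|\le C$, and write $\sigma(w_J)=e_{z(J)}$ for the induced bijection $z$ of the Walsh index set onto $\Z$. Then
\[
C^{-1}\bigl\|{\textstyle\sum_J} a_J w_J\bigr\|_p \le \bigl\|{\textstyle\sum_J} a_J e_{z(J)}\bigr\|_p \le C\bigl\|{\textstyle\sum_J} a_J w_J\bigr\|_p
\]
for every finitely-supported scalar sequence $(a_J)$. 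By duality (the adjoint $T^*$ is an $L_{p'}$-isomorphism extending $\sigma^{-1}$) it suffices to treat $p>2$.

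I then probe with the dyadic subgroup $H_n = \{J : J\subset\{0,\ldots,n-1\}\}$ of size $N=2^n$. The Walsh Dirichlet kernel satisfies $\sum_{J\in H_n} w_J = \prod_{j=0}^{n-1}(1+r_j) = N\mathbf{1}_{[0,1/N)}$, with $L_p$-norm exactly $N^{1-1/p}$. Consequently the integer set $Z_n:=z(H_n)$ of size $N$ satisfies $\|\sum_{z\in Z_n}e_z\|_p \asymp N^{1-1/p}$. Because $\|\sum_{z\in Z_n}e_z\|_\infty=N$ (attained at $0$) and $\|\sum_{z\in Z_n}e_z\|_2=\sqrt{N}$ hold for any $Z_n$, this saturates the log-convexity bound $\|f\|_p \le \|f\|_2^{2/p}\|f\|_\infty^{1-2/p}$. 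Running the same reasoning with each coset $J_0\triangle H_n$, the bijection $z$ partitions $\Z$ into blocks of size $N$ all saturating the same bound.

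The heart of the argument, and the main obstacle, is the structural step: a set $Z\subset\Z$ of size $N$ with $\|\sum_{z\in Z}e_z\|_p \asymp N^{1-1/p}$ must carry strong additive structure. For $p$ an even integer this is a direct additive-energy count followed by Balog--Szemer\'edi--Gowers and Freiman--Ruzsa, placing $Z$ inside a generalized arithmetic progression of bounded dimension and size $O(N)$; for general $p>2$ one uses the more delicate tools extracted from \cite{woj::0,czuwoj::}. Requiring this compatibly for every coset of every $H_n$ forces $z$ to be an approximate Freiman homomorphism from $\bigoplus_{j\in\N}\Z/2\Z$ into $\Z$, and since the source is $2$-torsion and the target is torsion-free, the only such homomorphism is trivial, contradicting the bijectivity of $z$. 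The technical crux is keeping the Freiman dimension bounded uniformly in $n$ and threading the per-coset GAP structures into a genuine global torsion obstruction rather than merely finitary constraints on each $Z_n$.
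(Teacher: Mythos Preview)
Your proposal assembles the right ingredients --- the Walsh Dirichlet kernel as test function, additive energy, Balog--Szemer\'edi--Gowers, and the torsion mismatch between $\bigoplus_{j}\Z/2\Z$ and $\Z$ --- but the structural step has a genuine gap, and it is not merely technical.

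The information you actually extract, namely $\bigl\|\sum_{z\in Z_n}e_z\bigr\|_p\asymp N^{1-1/p}$ (hence $E(Z_n)\asymp N^3$) for every $n$ and every coset $J_0\triangle H_n$, is \emph{not} sufficient to force a contradiction. The binary encoding $z(J)=\sum_{j\in J}2^{\,j}$ already satisfies all of these conditions: each $z(J_0\triangle H_n)$ is a genuine interval of integers of length $N$, whose Dirichlet kernel has $L_p$-norm $\asymp N^{1-1/p}$. So knowledge that the \emph{images} $Z_n$ are additively structured places no constraint on how $z$ interacts with the group law on the source, and your proposed passage from ``each $Z_n$ sits in a bounded-rank GAP'' to ``$z$ is an approximate Freiman homomorphism'' does not go through. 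What is missing is control of the additive energy of the \emph{graph} $\Gamma_n=\{(J,z(J)):J\in H_n\}\subset H_n\times\Z$, which genuinely couples the two group structures.

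The paper supplies exactly this. First, instead of the single test function $\sum_{J\in H_n}w_J$, one averages its translates (equivalently, tests with $\sum_{J\in H_n}w_J(a)\,w_J$ and integrates over $a$); combined with log-convexity between $L_p$, $L_2$ and $L_4$ this yields $E(\Gamma_n)\gtrsim_{\|T\|,p} N^3$ for all $p\neq 2$ at once, so no separate argument for non-even $p$ is needed (this is Proposition~\ref{prop.main}). Second, applying BSG and Pl\"unnecke to $\Gamma_n$ and exploiting that $2\cdot H_n=\{0\}$ while doubling is injective on $\Z$ gives the upper bound $E(\Gamma_n)\le N^{3-c}$ (Proposition~\ref{prop.k} with $m=2$). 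Together these force $\|T\|_{p\to p}\ge 2^{\Omega(n|p-2|)}$, a quantitative contradiction at each finite level $n$; no global ``threading'' of GAP structures across scales is required.
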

Young \cite[Theorem 2]{you::0} proved the above for one of the most natural candidates for $\sigma$, taking the ordering $e_0,e_1,e_{-1},e_2,e_{-2},\dots$ of the trigonometric functions to the Walsh-Paley ordering $1, r_1,r_2,r_1r_2,r_3,r_1r_2,r_2r_3,r_1r_2r_3,\dots $ of the Walsh functions.

In \cite{hinwen::0}, Hinrichs and Wenzel proved Theorem \ref{thm.pel} for a wide range of bijections including those given by replacing the Walsh-Paley ordering by the Walsh-Kronecker and Walsh-Kaczmarz orderings.  Our work covers the remaining possibilities.

Theorem \ref{thm.pel} does not extend to bijections between arbitrary infinite sets of trigonometric and Walsh functions, even if we only ask for an isomorphism between the closures of their respective linear spans.  To see why consider 
\begin{equation*}
\sigma:\{e_{3^{n}}:n \in \N_0\} \rightarrow \{r_n: n \in \N_0\}; e_{3^{n}} \mapsto r_n \text{ for all }n \in \N_0.
\end{equation*}
For $1\leq p<\infty$ we have a composition of isomorphisms
\begin{equation*}
\overline{\Span(e_{3^{n}}: n \in \N_0)} \rightarrow \ell_2 \rightarrow \overline{\Span(r_n: n \in \N_0)}
\end{equation*}
extending $e_{3^{n}}\mapsto \delta_n$ and $\delta_n \mapsto r_n$ (where $(\delta_n)_{n \in \N_0}$ is the standard basis for $\ell_2$) by \cite[(8.20), p215]{zyg::} and \cite[(8.4), p213]{zyg::}.  It follows that $\sigma$ \emph{does} extend to an isomorphism between the closures of the respective linear spans.

On the other hand Theorem \ref{thm.pel} does remain true for the endpoints $p=1$ or $p=\infty$ as noted in the discussion after \cite[Corollary 1.5]{woj::0} (indeed, it is also a consequence of that corollary).

The trigonometric and Walsh functions are naturally related to characters on compact Abelian groups; we shall take the book \cite{rud::1} as our basic reference.

Suppose that $G$ is a compact Abelian group. We write $\wh{G}$ for the dual group of $G$, that is the discrete Abelian group of continuous homomorphisms $G \rightarrow S^1$ -- the characters of $G$ -- where $S^1:=\{z \in \Z: |z|=1\}$.  We also write $G^\omega$ for the set of all functions $\N_0 \rightarrow G$ under point-wise operations inherited from $G$ and endowed with the product topology.

The trigonometric functions may be identified with the characters on the group $\T:=\R/\Z$ and the Walsh functions may be identified with the characters on $\D:=(\Z/2\Z)^{\omega}$.  We shall return to the details of this later, but before that we give a second application of the basic method to \cite[Theorem 1]{czuwoj::}.

Suppose that $G$ and $H$ are compact Abelian groups.  Any continuous surjective (group) homomorphism $G \rightarrow H$ pulls back to an algebra homomorphism, isometric onto its image,
\begin{equation}\label{eqn.isom}
L_1(H) \rightarrow L_1(G); f \mapsto (x \mapsto f(\phi(x))),
\end{equation}
where the algebra multiplication on $L_1$ is convolution with respect to Haar probability measure.

On the other hand, not all maps of the latter type arise as pullbacks of the former: suppose $G:=\D$ and $H:=\Z/p\Z$ for $p$ an odd prime.  Since $G$ is infinite so is $\wh{G}$, and so there is an injection $\phi:\wh{H} \rightarrow \wh{G}$.  Then
\begin{equation}\label{eqn.cest}
L_1(H) \rightarrow L_1(G); f  \mapsto \sum_{\lambda \in \wh{H}}{\wh{f}(\lambda)\phi(\lambda)}
\end{equation}
is a continuous injective algebra homomorphism.  However, since $H$ has odd order and all elements of $G$ have even order, $H$ cannot arise as a quotient of $G$.

We are interested in some cases where such maps do not arise. 
\begin{theorem}\label{thm.main}
Suppose that $G$ is a compact Abelian torsion group and $H$ is a finite Abelian group such that there is no continuous surjective homomorphism $G \rightarrow H^\omega$.  Then there is $N_{G,H}\in \N$ such that for all naturals $n \geq N_{G,H}$ and continuous injective algebra homomorphisms $T:L_1(H^n) \rightarrow L_1(G)$ we have $\|T\| \geq n^{\frac{1}{4}-o_{n \rightarrow \infty}(1)}$.  In particular, there is no continuous injective algebra homomorphism $L_1(H^\omega) \rightarrow L_1(G)$.
\end{theorem}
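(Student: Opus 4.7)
My plan is to exploit the algebra-homomorphism structure of $T$ to produce a character-to-character linear operator to which Proposition~\ref{prop.main} can be applied, and to extract an exponential lower bound from the non-surjectivity hypothesis.

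Because $L_1(H^n)$ and $L_1(G)$ are commutative Banach algebras under convolution with Gelfand spectra $\wh{H^n}$ and $\wh{G}$ respectively, the induced action of $T$ on multiplicative functionals gives a partial function $\Phi:\wh{G}\to\wh{H^n}$ satisfying $T(\chi)=\sum_{\gamma\in\Phi^{-1}(\chi)}\gamma$ for each $\chi\in\wh{H^n}$; injectivity of $T$ makes $\Phi$ surjective. Fixing a section $\sigma:\wh{H^n}\to\wh{G}$ of $\Phi$ and defining $S:L_p(H^n)\to L_p(G)$ on characters by $S(\chi)=\sigma(\chi)$, the operator $S$ takes characters to characters and decomposes as $S=P\circ T$ for $P$ the Fourier projection onto $\sigma(\wh{H^n})$. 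Combining $\|P\|_{2\to 2}=1$ with complex interpolation gives $\|S\|_{p\to p}\leq\|T\|_{1\to 1}^{\alpha(p)}$ for $p$ near $1$, with $\alpha(p)\to 1$.

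I then apply Proposition~\ref{prop.main} to $S$, with the roles of the proposition's $G$ and $H$ played by our $H^n$ and $G$, $\Lambda=\wh{H^n}$ (so $K=1$), and $m$ equal to the exponent of $\wh{H}$. For this choice $m\cdot\wh{H^n}=\{0\}$ and the proposition reduces to
\begin{equation*}
\|S\|_{p\to p}\geq\bigl|m\cdot\sigma(\wh{H^n})\bigr|^{\Omega((2-p)/m)}.
\end{equation*}
A bipartite-matching argument over fibres of $\Phi$ versus cosets of $\wh{G}[m]$ allows $\sigma$ to be chosen so as to maximise $|m\cdot\sigma(\wh{H^n})|$, namely the number of distinct $\wh{G}[m]$-cosets realisable with one representative drawn from each fibre.

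The last input is the hypothesis, which via Pontryagin duality becomes the non-embedding $\bigoplus_\omega\wh{H}\not\hookrightarrow\wh{G}$. Since $G$ is compact Abelian torsion, $\wh{G}$ has bounded exponent, and, after reducing to the primary components of $\wh{H}$ so that $m$ remains a constant depending only on $H$, the non-embedding imposes a uniform finite bound on the multiplicities that $\wh{G}$ can realise in the relevant torsion layer. A counting argument combining this bound with the disjointness of fibres forces $|m\cdot\sigma(\wh{H^n})|\geq(1+c_{G,H})^n$ for all $n\geq N_{G,H}$, and substitution into the chain above yields $\|T\|_{1\to 1}\geq(1+c_{G,H})^n$. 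The principal obstacle is this quantitative extraction: converting the qualitative non-embedding into an explicit exponential growth rate requires a careful combinatorial reduction, and balancing it against the interpolation loss $\alpha(p)$ so that neither the exponent $\Omega((2-p)/m)$ nor $\alpha(p)$ swallows the $(1+c_{G,H})^n$ factor is the crux of the argument.
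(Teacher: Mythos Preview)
Your approach has two genuine gaps, and the second one is fatal.

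\textbf{The interpolation step is unjustified.}  You write $S=P\circ T$ with $P$ the Fourier projection onto the finite set $\sigma(\wh{H^n})\subset\wh{G}$, and claim that $\|P\|_{2\to 2}=1$ together with interpolation yields $\|S\|_{p\to p}\leq\|T\|_{1\to 1}^{\alpha(p)}$.  But you only know $T$ is bounded $L_1\to L_1$; you have no control on $\|T\|_{2\to 2}$ (indeed $\|T(\chi)\|_2^2=|\Phi^{-1}(\chi)|$ may be arbitrarily large), and $\|P\|_{1\to 1}$ is the $L_1$ norm of a projection onto an unstructured finite set of characters, which is also uncontrolled.  There is no interpolation pair here that produces the claimed bound.

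\textbf{The choice of $m$ points the wrong way.}  You take $m$ equal to the exponent of $\wh{H}$ so that $m\cdot\wh{H^n}=\{0\}$, and then assert that a matching argument forces $|m\cdot\sigma(\wh{H^n})|\geq(1+c)^n$.  But $m\cdot\sigma(\wh{H^n})\subset m\cdot\wh{G}$, and $m\cdot\wh{G}$ is a fixed group independent of $n$.  Take $G=(\Z/2\Z)^\omega$ and $H=\Z/4\Z$: the hypothesis of the theorem holds (there is no continuous surjection $G\to H^\omega$ since $G$ has exponent $2$), yet with $m=4$ one has $m\cdot\wh{G}=\{0\}$, so $|m\cdot\sigma(\wh{H^n})|=1$ for every section $\sigma$.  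Your lower bound collapses to $1$.  No ``reduction to primary components'' rescues this, since $H$ is already $2$-primary.

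The paper proceeds in the opposite direction.  It applies Proposition~\ref{prop.k} (not Proposition~\ref{prop.main}) to the graph of the surjection $\alpha:V\to\wh{H^n}$ with $V\subset\wh{G}$ a finite subgroup, so that the relevant ratio is $|m\cdot\wh{G}|/|m\cdot\wh{H^n}|$.  A separate structural lemma (Lemma~\ref{lem.mui}) extracts from the non-surjection hypothesis an $m$ with $m\cdot\wh{G}$ \emph{finite} and $m\cdot\wh{H}\neq\{0\}$ --- exactly the reverse of your choice --- making this ratio exponentially small in $n$.  The link to $\|T\|$ avoids $L_p$ operators altogether: Rudin's description of algebra homomorphisms gives $\|1_\Gamma\|_{A(\wh{G}\times\wh{H^n})}\leq\|T\|$ directly, and a single application of log-convexity of $L_q$-norms yields $E(\Gamma)\geq|\Gamma|^3/\|T\|^2$.
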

Computing the norm of the map in (\ref{eqn.cest}), we see that if $G$ is infinite and $n \in \N$ then there is a continuous injective algebra homomorphism $T:L_1(H^n) \rightarrow L_1(G)$ such that $\|T\| \leq |H|^n$ which gives a limit to how far the theorem may be improved.

Our contribution in Theorem \ref{thm.main} is in the bounds.  \cite[Theorem 1]{czuwoj::} establishes roughly the same qualitative conclusion for $H$ a cyclic group of prime-power order but with the hypothesis that no subgroup of $G$ is homeomorphically isomorphic to $H^\omega$ instead of that there is no continuous surjective homomorphism $G \rightarrow H^\omega$.  One can establish the same quantitative conclusion as in Theorem \ref{thm.main} with the hypotheses of \cite[Theorem 1]{czuwoj::} by noting that if the hypotheses of \cite[Theorem 1]{czuwoj::} hold then there cannot be a continuous surjective homomorphism $G \rightarrow H^\omega$ because an isometry as in (\ref{eqn.isom}) cannot exist; we can therefore apply Theorem \ref{thm.main}.

To keep the paper readable we have included statements of the results from additive combinatorics which we use on the basis that these are the more unusual tools.  In the more routine analytic parts of the arguments we use various results from the books of Rudin \cite{rud::1} and Wojtaszczyk \cite{woj::} which we have not stated separately though precise references are given.

\section{The combinatorial content of the main theorems}\label{sec.tools}

Our arguments take advantage of the differing behaviour of the action of the integers on different groups.  For $Z$ an Abelian group, $m \in \N$ and $x \in Z$ define $m\cdot x := x+(m-1)\cdot x$ and $0\cdot x :=0_Z$, and for $S \subset Z$ put $m\cdot S:=\{m\cdot s: s \in S\}$.  (Compare this with $mS$ which is defined by $mS = S+(m-1)S$ for $m \in \N$ and $0S=\{0_Z\}$.)  We call the map $Z \rightarrow Z; x \mapsto m\cdot x$ multiplication by $m$.

Pl{\"u}nnecke's inequality is the following basic inequality about growth of sumsets.  It will be an important tool for us.
\begin{theorem}[Pl{\"u}nnecke's Inequality, {\cite[Corollary 6.26]{taovu::}}]
Suppose that $X$ is a finite subset of an Abelian group with $|X+X| \leq K|X|$.  Then for any $k \in \N$ we have $|kX| \leq K^k|X|$.
\end{theorem}
There is a lovely proof of this due to Petridis \cite{pet::} which the interested reader is encouraged to consult.

We capture the common combinatorial element of Theorems \ref{thm.pel} \& \ref{thm.main} in the following proposition.
\begin{proposition}\label{prop.k}
Suppose that $G$ and $H$ are compact Abelian groups; $\Lambda \subset \wh{G}$ is finite; $X \subset \wh{G} \times \wh{H}$ is a subset of the graph of a function $\phi:\Lambda \rightarrow \wh{H}$; and $m \in \N$ is a parameter.  Then 
\begin{equation*}
|X+X| \geq \left(\frac{|m \cdot X|}{|m\cdot\wh{G}|}\right)^{\frac{1}{m+1}}|X|.
\end{equation*}
\end{proposition}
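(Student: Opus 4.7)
My plan is to adapt the Balog-Szemer\'edi-Gowers (BSG) plus Pl\"unnecke-Ruzsa paradigm to the graph $\Gamma \subset \wh{G}\times\wh{H}$, using the bijection between $\Gamma$ and $\Lambda$ to transfer sumset information between the two coordinate projections. Writing $\alpha := E(\Gamma)/|\Gamma|^3$, the claimed inequality is equivalent to the statement $|m\cdot\Im\phi| \leq \alpha^{-O(m)}|m\cdot\wh{G}|$, and that is the form I would work towards.

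The first step is BSG: extract a refinement $\Gamma^* \subset \Gamma$ with $|\Gamma^*| \geq \alpha^{O(1)}|\Gamma|$ and small doubling $|\Gamma^* + \Gamma^*| \leq \alpha^{-O(1)}|\Gamma^*|$. Because $\Gamma$ is a graph, $\Gamma^*$ is also a graph over $\Lambda^*$, its first-coordinate projection, with $|\Lambda^*| = |\Gamma^*|$. Iterating small doubling via Pl\"unnecke-Ruzsa, the $k$-fold sumset of $\Gamma^*$ has size at most $\alpha^{-O(k)}|\Gamma^*|$; since $m\cdot\Gamma^*$ sits inside the $m$-fold sumset we obtain $|m\cdot\Gamma^*| \leq \alpha^{-O(m)}|\Gamma^*|$, and projecting to each coordinate yields
\[
|m\cdot\Lambda^*| \leq \alpha^{-O(m)}|\Gamma^*|
\quad\text{and}\quad
|m\cdot\phi(\Lambda^*)| \leq \alpha^{-O(m)}|\Gamma^*|.
\]

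The main obstacle is to upgrade these bounds, which only concern the structured refinement $\Lambda^*$, to the claimed inequality involving the full image $\Im\phi$ and the ambient size $|m\cdot\wh{G}|$. This upgrade is exactly what Sanders' Proposition 1.4 packages: a Ruzsa-covering argument covers $\Im\phi$ by $\alpha^{-O(1)}$ translates of $\phi(\Lambda^*)$, and a dual comparison using the small-doubling structure relates $|m\cdot\Lambda^*|$ to $|m\cdot\wh{G}|$. The $\Omega(1/m)$ exponent in the final bound emerges as the natural reciprocal of the $m$-fold Pl\"unnecke loss. The ``minor variant'' aspect of our statement is only the routing of the $\wh{H}$-side estimates through the graph bijection $\Gamma^* \leftrightarrow \Lambda^*$, which is automatic once BSG is in hand.
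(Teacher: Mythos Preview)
Your BSG and Pl\"unnecke steps match the paper, but the crucial ``upgrade'' is where the proposal breaks down. The paper does \emph{not} use Ruzsa covering or any ``dual comparison''. Instead, after Pl\"unnecke gives $|X + m\cdot X| \leq \alpha^{-O(m)}|X|$, it bounds $\|1_X \ast 1_{m\cdot X}\|_{\ell_2}^2$ in two ways. Cauchy--Schwarz gives a lower bound of $\alpha^{O(m)}|X|\,|m\cdot X|^2$. For the upper bound one counts solutions: if $(\lambda,\phi(\lambda)) + m\cdot(\gamma,\phi(\gamma)) = (\lambda',\phi(\lambda')) + m\cdot(\gamma',\phi(\gamma'))$ with all four points in $X$, then the first coordinate forces $\lambda - \lambda' = m(\gamma'-\gamma) \in m\cdot\wh{G}$, so the quadruple is determined by the triple $(\lambda,\ \lambda-\lambda',\ m\cdot(\gamma,\phi(\gamma)))$, of which there are at most $|X|\cdot|m\cdot\wh{G}|\cdot|m\cdot X|$. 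Comparing the two bounds yields $|m\cdot X| \leq \alpha^{-O(m)}|m\cdot\wh{G}|$, and the paper then passes to $|m\cdot\Im\phi|$ via the second-coordinate projection.

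Your route never produces $|m\cdot\wh{G}|$. Projecting $|m\cdot\Gamma^*| \leq \alpha^{-O(m)}|\Gamma^*|$ only compares $|m\cdot\phi(\Lambda^*)|$ and $|m\cdot\Lambda^*|$ to $|\Gamma^*|$, not to $|m\cdot\wh{G}|$. The Ruzsa covering you invoke would require control on $|\Im\phi + \phi(\Lambda^*)|$, which BSG does not supply: there is no reason the part of $\Gamma$ outside $\Gamma^*$ interacts additively with $\Gamma^*$ at all. And deferring the substance to ``Sanders' Proposition~1.4'' is circular here, since the present proposition is precisely the variant of that result being established. The single missing idea is the first-coordinate constraint $\lambda-\lambda' \in m\cdot\wh{G}$ inside the $\ell_2$ counting argument; that constraint is the only mechanism by which $|m\cdot\wh{G}|$ enters, and nothing in your outline generates it.
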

\begin{proof}
Let $K$ be such that $|X+X|=K|X|$.  By Pl{\"u}nnecke's inequality we have $|X + m\cdot X| \leq |(m+1)X| \leq K^{m+1}|X|$, and so by Cauchy-Schwarz we have
\begin{equation}\label{eqn.above}
\|1_{X} \ast 1_{m\cdot X}\|_{\ell_2(\wh{G}\times \wh{H})}^2 \geq \frac{\|1_{X} \ast 1_{m\cdot X}\|_{\ell_1(\wh{G}\times \wh{H})}^2}{|X+m\cdot X|} = \frac{|X|^2|m\cdot X|^2}{|X+m\cdot X|}\geq K^{-(m+1)}|X||m\cdot X|^2.
\end{equation}
The left hand side is the number of quadruples $((\lambda,\phi(\lambda)),m\cdot ( \gamma,\phi(\gamma)),(\lambda',\phi(\lambda')),m\cdot ( \gamma',\phi(\gamma')))$ such that
\begin{equation*}
(\lambda,\phi(\lambda))+m\cdot(\gamma, \phi(\gamma))=(\lambda',\phi(\lambda')) + m\cdot (\gamma',\phi(\gamma')).
\end{equation*}
It follows that such a quadruple is uniquely determined by $(\lambda, \lambda-\lambda', m\cdot (\gamma,\phi(\gamma)))$.  But $\lambda-\lambda' \in m\cdot \wh{G}$ and so there are at most $|X||m\cdot \wh{G}||m\cdot X|$ such triples, which is then an upper bound for (\ref{eqn.above}).  Cancelling and rearranging gives the result.
\end{proof}
At first glance it may seem like there is considerable loss in saying $|X+m\cdot X| \leq |(m+1)X|$.  This is true as far as the $m$-dependence goes (as shown by Bukh in \cite{buk::0}) but this will not concern us.

\section{Proof of Theorem \ref{thm.pel}}

The \textbf{additive energy} \cite[\S2.3]{taovu::} of a finite subset $\Lambda$ of an Abelian group is
\begin{equation*}
E(\Lambda) :=\sum_{\lambda_1+\lambda_2=\lambda_3+\lambda_4}{1_\Lambda(\lambda_1)1_\Lambda(\lambda_2)1_\Lambda(\lambda_3)1_\Lambda(\lambda_4)}.
\end{equation*}
This is an important quantity in additive combinatorics and its relevance for the problems considered here is identified in \cite[\S4]{hinwen::0}.

We shall be interested in graphs of functions and shall use the Balog-Szemer{\'e}di-Gowers Theorem to study those graphs with large additive energy.  This idea goes back to the celebrated work of Gowers \cite{gow::4} on Szemer{\'e}di's Theorem.
\begin{theorem}[Balog-Szemer{\'e}di-Gowers Theorem, {\cite[Theorem 2.31]{taovu::}}]
Suppose that $\Lambda$ is finite subset of an Abelian group with $E(\Lambda) \geq c|\Lambda|^3$.  Then there is a set $\Lambda' \subset \Lambda$ with $|\Lambda'| \geq c^{O(1)}|\Lambda|$ and $|\Lambda'+\Lambda'| \leq c^{-O(1)}|\Lambda'|$.
\end{theorem}
There is a proof of this in \cite{gow::4} and for the best-known constants behind the big-$O$ terms see \cite{sch::7}, though we shall not be concerned with these.

To state the driver behind the proof of Theorem \ref{thm.pel} we need one further piece of notation.  Given $G$, a compact Abelian group, and $\Lambda \subset \wh{G}$ finite, we define the projection
\begin{equation*}
\pi_\Lambda:M(G) \rightarrow L_1(G); \mu \mapsto \sum_{\lambda \in \Lambda}{\wh{\mu}(\lambda)\lambda},
\end{equation*}
and write $L_p^\Lambda(G)$ for the (Banach-)subspace of $f \in L_p(G)$ with $\supp \wh{f} \subset \Lambda$ (which is of course finite dimensional when $\Lambda$ is finite).
\begin{proposition}\label{prop.main}
Suppose that $G$ and $H$ are compact Abelian groups; $1 \leq p <2$ is a parameter; $\Lambda \subset \wh{G}$ is finite with $\|\pi_\Lambda\| \leq K$; $T:L_p^\Lambda(G) \rightarrow L_p(H)$ is a linear mapping taking characters to characters; and $m\in \N$ is a parameter such that multiplication by $m$ is injective on $T(\Lambda)$.  Then
\begin{equation*}
\|T\|_{p \rightarrow p} \geq \left(\frac{|T(\Lambda)|}{|m \cdot \wh{G}|}\right)^{\Omega\left(\frac{2-p}{m}\right)}K^{-O(2-p)}.
\end{equation*}
\end{proposition}
\begin{proof}
Let $g:=\sum_{\lambda \in \Lambda}{\lambda} \in C(G)$ and $\mu \in M(G)$ have unit norm such that $\langle \mu,g\rangle = \|g\|_{L_\infty(G)}=|\Lambda|$.  Then $f:=\pi_\Lambda(\mu)$ has $\langle f,g\rangle = |\Lambda|$ and $\|f\|_{L_1(G)} \leq K$.

By $\log$-convexity of $L_q$-norms, if $\phi$ is such that $\frac{1}{p} = \frac{\phi}{2} + (1-\phi)$ then
\begin{equation*}
\|f\|_{L_p(G)} \leq  \|f\|_{L_2(G)}^{\phi} \|f\|_{L_1(G)}^{1-\phi} \leq \|f\|_{L_2(G)}^{2\left(1-\frac{1}{p}\right)} K^{\frac{2}{p}-1}.
\end{equation*}
For $x,z \in G$ and $h:G \rightarrow \C$ write $\tau_z(h)(x):=h(x+z)$.  By $\log$-convexity of $L_q$-norms, if $\theta$ is such that $\frac{1}{2} = \frac{(1-\theta)}{p} + \frac{\theta}{4}$ (so $\theta = \frac{4-2p}{4-p}$) then
\begin{equation*}
\|T(\tau_z(f))\|_{L_p(H)}^{1-\theta}\|T(\tau_z(f))\|_{L_4(H)}^\theta \geq \|T(\tau_z(f))\|_{L_2(H)}.
\end{equation*}
Hence
\begin{equation*}
\|T(\tau_z(f))\|_{L_4(H)}^4 \geq \frac{\|T(\tau_z(f))\|_{L_2(H)}^{4\theta^{-1}}}{\|T\|_{p \rightarrow p}^{4(\theta^{-1}-1)}\|f\|_{L_p(G)}^{4(\theta^{-1}-1)}} = \frac{\|T(\tau_z(f))\|_{L_2(H)}^{\frac{8-2p}{2-p}}}{\|T\|_{p \rightarrow p}^{\frac{2p}{2-p}}\|f\|_{L_p(G)}^{\frac{2p}{2-p}}} \geq \frac{\|T(\tau_z(f))\|_{L_2(H)}^{\frac{8-2p}{2-p}}}{\|T\|_{p \rightarrow p}^{\frac{2p}{2-p}}K^2\|f\|_{L_2(G)}^{\frac{4p-4}{2-p}}}.
\end{equation*}
By convexity of $X \mapsto X^{\frac{4-p}{2-p}}$ and Jensen's inequality; linearity of $T$; interchanging the order of integration; orthogonality of characters; and the fact $T$ takes characters to unit vectors in $L_2$, we have
\begin{align*}
\int{\|T(\tau_z(f))\|_{L_2(H)}^{\frac{8-2p}{2-p}}dm_G(z)} & \geq \left(\int{\|T(\tau_z(f))\|_{L_2(H)}^2dm_G(z)} \right)^{\frac{4-p}{2-p}}\\ & = \left(\int{\left|\sum_{\lambda \in \Lambda}{\wh{f}(\lambda)\lambda(z)T(\lambda)(w)}\right|^2dm_H(w)dm_G(z)} \right)^{\frac{4-p}{2-p}}\\ & = \left(\sum_{\lambda \in \Lambda}{|\wh{f}(\lambda)|^2} \right)^{\frac{4-p}{2-p}}=\|f\|_{L_2(G)}^{\frac{8-2p}{2-p}},
\end{align*}
and so
\begin{equation*}
\int{\|T(\tau_z(f))\|_{L_4(H)}^4dm_G(z)} \geq \frac{\|f\|_{L_2(G)}^6}{\|T\|_{p \rightarrow p}^{\frac{2p}{2-p}}K^2}.
\end{equation*}
On the other hand
\begin{align*}
\int{\|T(\tau_z(f))\|_{L_4(H)}^4dm_G(z)} & = \int{\left|\sum_{\lambda \in \Lambda}{\wh{f}(\lambda)\lambda(z)T(\lambda)(w)}\right|^4dm_H(w)dm_G(z)}\\
& = \sum_{\substack{\lambda_1+\lambda_4=\lambda_2+\lambda_3\\ T(\lambda_1)+T(\lambda_4)=T(\lambda_2)+T(\lambda_3)}}{\wh{f}(\lambda_1)\overline{\wh{f}(\lambda_2)\wh{f}(\lambda_3)}\wh{f}(\lambda_4)}\\
& \leq \|f\|_{L_1(G)}^4E(\Gamma) \leq K^4E(\Gamma),
\end{align*}
where $\Gamma$ is the graph of $\Lambda \rightarrow \wh{H}; \lambda \mapsto T(\lambda)$ in $\wh{G} \times \wh{H}$.  Combining the above with Cauchy-Schwarz we get
\begin{align*}
E(\Gamma)  \geq \|f\|_{L_2(G)}^6\cdot \frac{1}{\|T\|_{p \rightarrow p}^{\frac{2p}{2-p}}K^6} & \geq \left(\frac{|\langle f,g\rangle|}{\|g\|_{L_2(G)}}\right)^6\cdot \frac{1}{\|T\|_{p \rightarrow p}^{\frac{2p}{2-p}}K^6}\\ & \geq |\Lambda|^3\cdot \frac{1}{\|T\|_{p \rightarrow p}^{\frac{2p}{2-p}}K^6} = |\Gamma|^3\cdot \frac{1}{\|T\|_{p \rightarrow p}^{\frac{2p}{2-p}}K^6}.
\end{align*}
Finally, let $\epsilon:=|\Gamma|^{-3}E(\Gamma)$ so that by the above $\epsilon^{-1}\leq \|T\|_{p \rightarrow p}^{\frac{2p}{2-p}}K^6$.  Apply the Balog-Szemer{\'e}di-Gowers Theorem to $\Gamma$ to get $X \subset \Gamma$ with
\begin{equation*}
|X+X|\leq  \|T\|_{p \rightarrow p}^{\frac{O(1)}{2-p}}K^{O(1)}|X| \text{ and } |X| \geq \|T\|_{p \rightarrow p}^{-\frac{O(1)}{2-p}}K^{-O(1)}|\Gamma|.
\end{equation*}
We now apply Proposition \ref{prop.k} and use that $|m\cdot X| \geq \|T\|_{p \rightarrow p}^{-\frac{O(1)}{2-p}}K^{-O(1)} |T(\Lambda)|$ since multiplication by $m$ is injective on $T(\Lambda)$ and $|T(\Lambda)| \leq |\Gamma|$.  This rearranges to give the result.
\end{proof}
The start of the argument above is similar to the proof of \cite[Proposition 3.1]{hinwen::0}, though for us the bound $\|\pi_\Lambda\| \leq K$ ensures that a duality argument provides functions which take on the role of the Dirichlet kernel used in \cite[Proposition 3.1]{hinwen::0}.

Proposition \ref{prop.main} immediately answers a stronger local\footnote{For a discussion of what local means in Banach spaces see \cite{pie::}.} version of \cite[Problem 5.3]{pel::1} discussed in the text following that statement.
\begin{corollary}\label{cor.1}
Suppose that $1 \leq p <2$ and $T:L_p((\Z/2\Z)^k) \rightarrow L_p(\T)$ is an injective linear map taking characters to characters.  Then
\begin{equation*}
\|T\|_{p \rightarrow p} \geq \exp\left(ck(2-p)\right)
\end{equation*}
for some absolute $c>0$.
\end{corollary}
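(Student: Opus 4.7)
The plan is to apply Proposition \ref{prop.main} with $G := (\Z/2\Z)^k$, $H := \T$, $\Lambda := \wh{G}$, and the parameter $m := 2$. With these choices the corollary drops out once we verify that each quantity appearing in the proposition's bound is under control.

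First I would observe that $K = 1$ works. Since $G$ is finite, so is $\wh{G}$, and Fourier inversion gives $\pi_{\wh{G}} = \mathrm{id}$ on $L_1(G)$; hence $\|\pi_\Lambda\|_{1 \rightarrow 1} = 1$. Next I would exploit the torsion mismatch between $\wh{G}$ and $\wh{\T}$: every element of $\wh{G} = (\Z/2\Z)^k$ has order dividing $2$, so $2 \cdot \wh{G} = \{0\}$ and in particular $|m \cdot \wh{G}| = 1$. Conversely $\wh{\T} = \Z$ is torsion-free, and the injectivity of $T$ ensures $|T(\Lambda)| = |\Lambda| = 2^k$ as a subset of $\Z$; since multiplication by $2$ on $\Z$ is injective we get $|m \cdot T(\Lambda)| = 2^k$ as well.

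Substituting into the conclusion of Proposition \ref{prop.main} yields
\begin{equation*}
\|T\|_{p \rightarrow p} \geq \left(\frac{2^k}{1}\right)^{\Omega((2-p)/2)} \cdot 1^{-O(2-p)} = 2^{\Omega(k(2-p))},
\end{equation*}
which is exactly the claimed bound.

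There is essentially no genuine obstacle once Proposition \ref{prop.main} is in hand: the single insight required is the choice $m = 2$, which is the smallest integer annihilating the exponent-$2$ group $\wh{G}$ while still acting injectively on the torsion-free group $\wh{\T}$. This maximises the ratio $|m \cdot T(\Lambda)|/|m \cdot \wh{G}|$ in the most favourable way possible and supplies the full power of $k$ in the exponent.
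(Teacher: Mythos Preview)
Your proof is correct and follows exactly the same approach as the paper: apply Proposition \ref{prop.main} with $G=(\Z/2\Z)^k$, $H=\T$, $\Lambda=\wh{G}$, $m=2$, and $K=1$. If anything, you are more explicit than the paper in spelling out why $|2\cdot\wh{G}|=1$ and why $|2\cdot T(\Lambda)|=2^k$ (the paper simply records $|T(\Lambda)|=2^k$ and leaves the rest implicit).
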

\begin{proof}
For $1 \leq p < 2$ take $G=(\Z/2\Z)^k$, $H=\T$, $\Lambda=\wh{G}$ and $m=2$.  In this case we see that $\|\pi_\Lambda\|=1$ and since $T$ is injective taking characters to characters and multiplication by $2$ is injective on $T(\Lambda)$ we have $|T(\Lambda)| =2^k$.  The claimed bound follows. 
\end{proof}
The same argument works for any finite Abelian group of exponent $r$ with $k$ invariant factors in place of $(\Z/2\Z)^k$ except that $c$ now depends on $r$.  This in turn means that the next proof can be adapted to give Theorem \ref{thm.pel} for any bounded Vilenkin system in place of the Walsh functions.

\begin{proof}[Proof of Theorem \ref{thm.pel}]
Suppose that $T:L_p[0,1) \rightarrow L_p[0,1)$ is an isomorphism extending some bijection $\sigma$ from the Walsh functions onto the trigonometric functions.  If $2<p<\infty$ then $T^*:L_{p'}[0,1) \rightarrow L_{p'}[0,1)$ is an isomorphism by \cite[I.A.12, I.A.13, I.A.14]{woj::} under the usual identification of the duals of Lebesgue spaces \cite[\S I.B.4]{woj::}.

For finite $J \subset \N_0$ and $z \in \Z$
\begin{equation*}
\langle T^*w_J,e_z\rangle = \langle w_J,Te_z\rangle = \langle w_J,w_{\sigma^{-1}(z)}\rangle = \begin{cases} 1 & \text{ if } z=\sigma(J)\\ 0 &\text{ otherwise.}\end{cases}
\end{equation*}
Since $(e_z)_{z \in\Z}$ is an orthonormal basis for $L_2[0,1)$ we see that $T^*$ maps Walsh functions onto the trigonometric functions.  It follows that $(T^*)^{-1}$ is an isomorphism $L_{p'}[0,1) \rightarrow L_{p'}[0,1)$ taking Walsh functions to trigonometric functions.  We conclude that we may assume $1<p<2$.

Define the (rather clumsily written) maps
\begin{equation*}
R:L_p[0,1) \rightarrow L_p(\T); f \mapsto \left(t+\Z\mapsto f\left(\sum_{x \in [0,1)}{x1_{[x \in t+\Z]}}\right)\right),
\end{equation*}
and
\begin{equation*}
S:L_p[0,1) \rightarrow L_p(\D); f \mapsto \left(t\mapsto f\left(\sum_{i=1}^\infty{2^{-i}1_{[t_i=1+2\Z]}} \right)\right).
\end{equation*}
These are isometric isomorphisms extending a bijection between the trigonometric functions and the characters on $\T$, and the other between the Walsh functions and the characters on $\D$.  (Formally this is the relationship between trigonometric and Walsh functions, and characters of compact Abelian groups which was mentioned in the introduction.)

Let $k \in \N$ be large enough that the lower bound in Corollary \ref{cor.1} is strictly larger than $\|T\|_{p \rightarrow p}$.  Write $\pi:\D \rightarrow (\Z/2\Z)^k$ for the projection onto the first $k$ coordinates of $\D$ so that the map $P:L_p((\Z/2\Z)^k)\rightarrow L_p(\D); f \mapsto f \circ \pi$ is an isometric isomorphism onto its image taking characters to characters.  Then the map $RTS^{-1}P:L_p((\Z/2\Z)^k) \rightarrow L_p(\T)$ is an injective linear map taking characters to characters of norm at most $\|T\|_{p \rightarrow p}$.  This contradicts the choice of $k$ after application of Corollary \ref{cor.1}.
\end{proof}

\section{Proof of Theorem \ref{thm.main}}

To make use of the hypotheses on $G$ and $H$ we shall need the following lemma.
\begin{lemma}\label{lem.mui}
Suppose that $G$ is a compact Abelian torsion group and $H$ is a finite Abelian group such that there is no continuous surjective homomorphism $G \rightarrow H^\omega$.  Then there is some $m\in \N$ such that $m\cdot \wh{G}$ is finite and $m\cdot \wh{H} \neq \{0_{\wh{H}}\}$.
\end{lemma}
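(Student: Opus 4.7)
The plan is to prove the contrapositive: assuming no such $m$ exists, I would construct a continuous surjective homomorphism $G\to H^\omega$, contradicting the hypothesis. By Pontryagin duality for compact abelian groups this amounts to producing an injective homomorphism of discrete abelian groups from $\wh{H^\omega}=\bigoplus_{n\in\N}\wh{H}$ into $\wh{G}$, and it is in this dual picture that I would work. Set $A:=\wh{H}$ and $B:=\wh{G}$.

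First I would verify that $G$ compact and torsion forces $G$ to have bounded exponent: the closed subgroups $G[n]:=\ker(g\mapsto n\cdot g)$ cover $G$, so by Baire category some $G[n]$ has nonempty interior and is therefore an open finite-index subgroup, from which a uniform bound on element orders follows. Dually $B$ is a torsion discrete abelian group of bounded exponent. The set $I:=\{m\geq 1:mB\text{ finite}\}$ is nonempty (containing the exponent of $G$), closed upward under divisibility, and closed under gcd by a Bezout argument showing $\gcd(m_1,m_2)B\subseteq m_1B+m_2B$; hence $I$ consists of exactly the positive multiples of some minimum $N_0\geq 1$. The negation of the lemma's conclusion is precisely that $\exp(H)\mid m$ for every $m\in I$, equivalently $\exp(H)\mid N_0$.

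Next I would pass to $p$-primary components, writing $B=\bigoplus_p B_p$ and $A=\bigoplus_p A_p$. Because multiplication by any integer coprime to $p$ is a bijection on $B_p$, the $p$-part of $N_0$ equals $p^{a_p}$, where $a_p$ is the minimal $j\geq 0$ with $p^jB_p$ finite. Letting $p^{b_p}=\exp(A_p)$, the hypothesis $\exp(H)\mid N_0$ becomes $b_p\leq a_p$ for every prime $p$. It therefore suffices to build, for each prime $p\mid\exp(H)$, an embedding $\bigoplus_{n\in\N}A_p\hookrightarrow B_p$, and then to assemble them into the required injection $A^{(\omega)}\hookrightarrow B$.

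For such a prime $p$ I would invoke Pr\"ufer's structure theorem to write $B_p\cong\bigoplus_{k\geq 1}(\Z/p^k\Z)^{(I_k)}$ with only finitely many $I_k$ nonempty. Then $p^{b_p-1}B_p\supseteq p^{a_p-1}B_p$ is infinite by minimality of $a_p$, and a summand-by-summand computation forces some $I_{k_0}$ with $k_0\geq b_p$ to be infinite. Since $A_p$ is a finite abelian $p$-group of exponent $p^{b_p}\leq p^{k_0}$, it embeds into a finite power $(\Z/p^{k_0}\Z)^r$, so $\bigoplus_{n\in\N}A_p$ embeds into $(\Z/p^{k_0}\Z)^{(I_{k_0})}\subseteq B_p$. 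Assembling over primes and dualising delivers the forbidden surjection $G\to H^\omega$. The main obstacle is pure bookkeeping: translating the arithmetic condition $I\subseteq\exp(H)\N$ into the prime-by-prime inequalities $b_p\leq a_p$; once this is done Pr\"ufer's theorem supplies the embedding essentially for free.
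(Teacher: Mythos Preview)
Your proof is correct and follows essentially the same route as the paper: both reduce via Pontryagin duality to constructing an injection $\bigoplus_{\N}\wh{H}\hookrightarrow\wh{G}$, decompose $\wh{G}$ (of bounded exponent) as a direct sum of cyclic $p$-groups, identify the minimal $m$ with $m\cdot\wh{G}$ finite as $\prod_p p^{e_p}$ (your $N_0$, with $a_p=e_p$), and then, assuming $\exp(H)\mid m$, use the infinitely many cyclic summands of order $\geq p^{b_p}$ at each relevant prime to build the embedding. Your Baire-category argument for bounded exponent and the gcd/ideal characterisation of $\{m:m\cdot\wh{G}\text{ finite}\}$ are pleasant self-contained alternatives to the paper's citations, but the overall architecture is the same.
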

\begin{proof}
Since $G$ is a torsion group it does not contain any elements of infinite order and so \cite[\S2.5.3]{rud::1} is of bounded order.  It follows \cite[\S2.5.4]{rud::1} that $\wh{G}$ is (a discrete Abelian group) of bounded order and so by \cite[B8]{rud::1} there is a set $\mathcal{C}$ of cyclic subgroups of $\wh{G}$ such that $\wh{G}= \bigoplus_{C \in \mathcal{C}}{C}$.  By the Fundamental Theorem of Arithmetic and the Chinese Remainder Theorem every cyclic group can be written as a direct sum of cyclic groups of prime-power order and so we may suppose that every $C \in \mathcal{C}$ has order a power of a prime.

Write $D$ for the set of primes $p$ such that there are infinitely many $C \in \mathcal{C}$ with order divisible by $p$.  Since $\wh{G}$ has bounded order the set $D$ is finite.  For each $p \in D$ write $e_p$ for the largest natural number such that there are infinitely many $C \in \mathcal{C}$ of order $p^{e_p}$ which exists since $\wh{G}$ has bounded order.  

Write $m:=\prod_{p \in D}{p^{e_p}}$ and note that $m\cdot \wh{G}$ is finite.  For each $p \in D$ let $(C_{i,p})_{i \in \N}$ be distinct elements of $\mathcal{C}$ of order $p^{e_p}$, which exist by hypothesis, and for each $i \in \N$ let $\Gamma_i:=\bigoplus_{p\in D}{C_{i,p}}$ which is a cyclic group of order $m$ by the Chinese Remainder Theorem.

Since $H$ is finite, $\wh{H}$ is finite and by the structure theorem for finite Abelian groups (invariant factor form) there are integers $1 < d_1 \divides d_2 \divides \cdots \divides d_r$ and subgroups $\Lambda_1,\dots,\Lambda_r$ such that $\Lambda_i$ is cyclic of order $d_i$ and $\wh{H}=\Lambda_1\oplus \cdots \oplus \Lambda_r$.

Suppose that $d_r \divides m$.  Then $d_j \divides m$ for all $1 \leq j \leq r$ and so for each $i\in \N$ there is an injective homomorphism
\begin{equation*}
\phi_i:\Lambda_1\oplus \cdots \oplus \Lambda_r \rightarrow \Gamma_{1+r(i-1)}\oplus \Gamma_{2+r(i-1)}\oplus \cdots \oplus \Gamma_{r + r(i-1)}.
\end{equation*}
Write $\wh{H}^{\N}$ for the set of functions $f:\N \rightarrow \wh{H}$ with finite support; by \cite[2.2.3]{rud::1} this is homeomorphically isomorphic to $(H^\omega)^\wedge$.

The injections above give rise to an injective homomorphism
\begin{equation*}
\wh{H}^{\N} \rightarrow \wh{G}; f \mapsto \sum_{i \in \supp f}{\phi_i(f(i))}.
\end{equation*}
This in turn induces a continuous surjective homomorphism $(\wh{G})^\wedge \rightarrow \left(\wh{H}^{\N}\right)^{\wedge}$.  By Pontryagin duality  \cite[\S1.7.2]{rud::1} the first group is homeomorphically isomorphic to $G$ and the second to $H^\omega$.  The lemma is proved.
\end{proof}
As in \cite{czuwoj::} we shall use a quantitative version of Cohen's Idempotent Theorem which we now record.
\begin{theorem}[{\cite[Theorem 1.1]{san::12}}]\label{thm.idem}
Suppose that $M \geq 1$.  Then for all finite Abelian groups $G$ and functions $f:G \rightarrow \Z$ with $\|f\|_{A(G)} \leq M$ there is some natural number $l$, integers $z_1,\dots,z_l$, and cosets $W_1,\dots,W_l$ of (possibly different) subgroups of $G$, such that
\begin{equation*}
f=z_11_{W_1}+\cdots +z_l1_{W_l} \text{ and } \|z\|_{\ell_1} \leq \exp\left(M^{4+o(1)}\right).
\end{equation*}
\end{theorem}
With these results in hand we are ready to prove the result essentially following the arguments of Rudin.
\begin{proof}[Proof of Theorem \ref{thm.main}]
To start the argument we apply Lemma \ref{lem.mui} to $G$ and $H$ to fix $m=m_{G,H} \in \N$ such that $m\cdot \wh{G}$ is finite, say of size $M$, and $m\cdot \wh{H} \neq \{0_{\wh{H}}\}$.  

Suppose that $T:L_1(H^n) \rightarrow L_1(G)$ is a continuous injective algebra homomorphism.  Following \cite[\S4.1.1]{rud::1} there is a set $S \subset \wh{G}$ and a map $\alpha:S \rightarrow \wh{H}^n$ such that for all $f \in L_1(H^n)$ we have $T(f)^\wedge(\gamma)=\wh{f}(\alpha(\gamma))$ for all $\gamma \in S$ and $T(f)^\wedge(\gamma)=0$ otherwise.  By \cite[\S4.4.3]{rud::1} the graph $\Gamma_0:=\{(\lambda,\alpha(\lambda)): \lambda \in S\} \subset \wh{G} \otimes \wh{H}^n$ has $1_{\Gamma_0} \in B(\wh{G} \otimes \wh{H}^n)$ and $\|1_{\Gamma_0}\|_{B(\wh{G} \otimes \wh{H}^n)} \leq \|T\|$.  (This inequality is not in the statement but is established in the last line of the proof of \cite[\S4.4.3]{rud::1}.)

Since $T$ is injective, $\alpha$ is a surjection and so there is a finite set $F \subset S \subset \wh{G}$ such that $\alpha(F)=\wh{H}^n$.  Since $\wh{G}$ has bounded order (as before, by \cite[\S2.5.3]{rud::1} and \cite[\S2.5.4]{rud::1} since $G$ is a compact Abelian torsion group) we see that $V:=\langle F\rangle$ is finite.  Write $\Gamma$ for the graph of $\alpha|_V$ which is surjective by design and has $\Gamma=\Gamma_0 \cap (V \otimes \wh{H}^n)$.  This is finite so
\begin{equation*}
\|1_{\Gamma}\|_{A(V \otimes \wh{H}^n)}=\|1_{\Gamma}\|_{B(\wh{G} \otimes \wh{H}^n)}=\|1_{V \otimes \wh{H}^n}1_{\Gamma_0}\|_{B(\wh{G}\otimes \wh{H}^n)} \leq \|1_{V \otimes \wh{H}^n}\|_{B(\wh{G}\otimes \wh{H}^n)}\|1_{\Gamma_0}\|_{B(\wh{G}\otimes \wh{H}^n)} \leq \|T\|,
\end{equation*}
and hence by the Quantitative Idempotent Theorem there is a natural $l$, integers $z_1,\dots,z_l$, and cosets $\Gamma_1,\dots,\Gamma_l$ of (possibly different) subgroups of $V \otimes \wh{H}^n$ such that
\begin{equation*}
1_\Gamma =z_11_{\Gamma_1}+\cdots + z_l1_{\Gamma_l} \text{ and } l \leq \|z\|_{\ell_1} \leq \exp\left(\|T\|^{4+o(1)}\right).
\end{equation*}
Recall the choice of $m$ from the very start.  If $(\lambda,\gamma) \in m\cdot \Gamma$ then there is $(\lambda',\gamma') \in \Gamma$ such that $\lambda =m\cdot \lambda'$ and $\gamma=m\cdot \gamma'$.  By the above equality we must have some $1 \leq i \leq l$ such that $1_{\Gamma_i}(\lambda',\gamma') \neq 0$ and hence $1_{m\cdot \Gamma_i}(\lambda,\gamma)\neq 0$.  We conclude that
\begin{equation*}
1_{m\cdot \Gamma} \leq 1_{m\cdot \Gamma_1}+\cdots + 1_{m\cdot \Gamma_l}.
\end{equation*}
The set $F$, which generates $V$, has been chosen so that $\alpha(F)=\wh{H}^n$ and since $|m\cdot \wh{H}|\geq 2$ we have
\begin{equation*}
2^n \leq \sum_{\gamma \in (m\cdot \wh{H})^n}{\sum_{\lambda \in m\cdot V}{1_{m\cdot \Gamma}(\lambda,\gamma)}} \leq l\max_{1 \leq i \leq l}{\sum_{\gamma \in (m\cdot \wh{H})^n}{\sum_{\lambda \in m\cdot V}{1_{m\cdot \Gamma_i}(\lambda,\gamma)}}}.
\end{equation*}
Let $1 \leq i\leq l$ be such that
\begin{equation*}
\exp(-\|T\|^{4+o(1)})2^n \leq \sum_{\gamma \in (m\cdot \wh{H})^n}{\sum_{\lambda \in m\cdot V}{1_{m\cdot \Gamma_i}(\lambda,\gamma)}} = |m\cdot \Gamma_i|.
\end{equation*}

Finally, apply Proposition \ref{prop.k} with function $\alpha$, parameter $m$, and the coset $\Gamma_i$ to get
\begin{equation*}
|\Gamma_i|=|\Gamma_i+\Gamma_i| \geq \left(\frac{|m\cdot \Gamma_i|}{|m\cdot \wh{G}|}\right)^{\frac{1}{m+1}}|\Gamma_i| \geq \left(\frac{\exp(-\|T\|^{4+o(1)})2^n}{M}\right)^{\frac{1}{m+1}}|\Gamma_i|.
\end{equation*}
The main part of the result is proved.

For the last part suppose that there were a continuous injective algebra homomorphism $R:L_1(H^\omega) \rightarrow L_1(G)$, and let $n \geq N_{G,H}$ be a natural such that the lower bound in the main part of the theorem is strictly larger than $\|R\|$.  Projection onto the first $n$ coordinates is a continuous surjective homomorphism $H^\omega \rightarrow H^n$, and hence there is a norm $1$ injective algebra homomorphism $S:L_1(H^n) \rightarrow L_1(H^\omega)$.  But then $RS:L_1(H^n) \rightarrow L_1(G)$ is an injective algebra homomorphism of norm $\|RS\| \leq \|R\|$, which contradicts how $n$ has been chosen.  The result is proved.
\end{proof}

\section*{Acknowledgements}

The author should very much like to thank the referees for their careful reading of the paper and in particular, for identifying an error in an early version of the proof of Theorem \ref{thm.main}.  We take a moment to explain this now.

In the earlier version of the paper (available on the arXiv at \cite{san::51}) the Balog-Szemer{\'e}di-Gowers Theorem was used in place of the Quantitative Idempotent Theorem in the proof of Theorem \ref{thm.main} and a stronger bound of the form
\begin{equation}\label{eqn.jhg}
\|T\| \geq (1+c_{G,H})^n \text{ for all }n \geq N_{G,H}
\end{equation}
with constants $c_{G,H}>0$ and $N_{G,H}\in \N$ depending only on $G$ and $H$, was claimed.  The proof was incorrect because while the Balog-Szemer{\'e}di-Gowers Theorem guarantees a large part of the graph of a function with large additive energy is structured, it might be that that part has a very small projection onto the codomain of the function.  It seems likely that if the following conjecture were true then bounds of the form (\ref{eqn.jhg}) could be recovered.
\begin{conjecture}
Suppose that $G$ is a finite Abelian group and $A \subset G$ has $\|1_A\|_{A(G)} \leq M$.  Then there is $l=M^{O(1)}$ and sets $A_1,\dots,A_l$ partitioning $A$ such that
\begin{equation*}
|A_i+A_i| \leq M^{O(1)}|A_i| \text{ for all }1 \leq i \leq l.
\end{equation*}
\end{conjecture}

\bibliographystyle{halpha}

\bibliography{references}

\end{document}